\newcommand{\labbel}{\label}
\title{Compactness  of $ \omega ^ \lambda $ for $\lambda$ singular}
\author{Paolo Lipparini}
\address{Dipartimento di Matematica\\Viale della Ricerca Scienfitica\\II Universit\`a di Roma (Tor Vergata)\\I-00133 ROME ITALY}
\email{lipparin@axp.mat.uniroma2.it}
\urladdr{http://www.mat.uniroma2.it/\textasciitilde lipparin}
\keywords{Powers of omega; finally compact topological space; infinitary language; ultrafilter convergence, limit point;  uniform, $( \lambda , \lambda )$-regular  ultrafilter; nonstandard element; weakly compact cardinal }
\newtheorem{thm}{Theorem}
\newtheorem{cor}[thm]{Corollary}          
\newtheorem{proposition}[thm]{Proposition}
\newtheorem{theorem}[thm]{Theorem}
\theoremstyle{definition}
\newtheorem{definition}[thm]{Definition} 
\newcommand{\brfrt}{\hspace{0 pt}}
\newcommand{\m}{\mathfrak}
\newcommand{\iit}{\mathfrak{iit}}
\begin{document}

\begin{abstract}   
We characterize the compactness properties of
the 
product of $ \lambda $ copies of the space $ \omega$ with the discrete topology,
dealing in particular with the case $\lambda$ singular,
using regular and uniform ultrafilters,
infinitary languages and
nonstandard elements.
We also deal with
products of  uncountable  regular cardinals with the order topology.
\end{abstract}

\maketitle

The problem of determining the compactness properties satisfied by powers
of the countably infinite discrete topological space $ \omega$ originates
from Stone \cite{St}, who proved  that
$ \omega ^{ \omega _1} $ is not normal,
 hence, in particular,  not Lindel\"of. 
More generally, Mycielski \cite{M}
showed that $\omega^ \kappa $ is not finally $\kappa$-compact, 
for every infinite cardinal $\kappa$ strictly less than the first weakly inaccessible
cardinal. Recall that a topological space is said to be \emph{finally $\kappa$-compact} if any open cover has a subcover of
cardinality strictly less than $\kappa$. \emph{Lindel\"ofness} is the same as 
final $ \omega_1$-compactness. 
Previous work on the subject had been also done  by A. Ehrenfeucht, 
P. Erd\"os, A. Hajnal and J. \L o\'s; see  \cite{M} for details.
Mycielski's result cannot be generalized to arbitrarily large cardinals:
if $\kappa$ is weakly compact then
$\omega^ \kappa $ is indeed finally $\kappa$-compact:
see  Keisler and Tarski \cite[Theorem 4.32]{KT}. Related work is due to
D. V. \v{C}udnovski\u{\i}, W. Hanf, D. Monk, D. Scott,  
S. Todor\v cevi\'c and S. Ulam, among many others.
With regard to powers of $ \omega$ a more refined 
result has been obtained by
Mr\'owka  who, e.~g., in \cite{Mr2} showed that if the infinitary language
$\mathcal L _{ \omega _1, \omega } $
is $ ( \kappa  , \kappa   )$-compact, then
$\omega^ \kappa $ is  finally $\kappa$-compact.
This is a stronger result since Boos \cite{B} showed that it is possible that
$\mathcal L _{ \omega _1, \omega } $
is $ ( \kappa  , \kappa   )$-compact, even, 
that 
$\mathcal L _{ \kappa , \omega } $
is $ ( \kappa  , \kappa   )$-compact, 
 without $\kappa$ being weakly compact.
To the best of our knowledge the gap between 
Mycielski's and  Mr\'owka's results had not been 
exactly filled until we showed in 
\cite{cpnw} that   Mr\'owka gives the optimal bound, that is, for $ \kappa $
regular, 
$\omega^ \kappa $ is  finally $\kappa$-compact
if and only if $\mathcal L _{ \omega _1, \omega } $
is $ ( \kappa  , \kappa   )$-compact.
The aim of 
the present note is to  show that the result holds also for a singular 
 cardinal $\kappa$. 
In order to give the proof, we need 
to use  uniform and regular ultrafilters,  as well as
nonstandard elements; in particular, we shall introduce some related principles
which may have independent interest and
which, in a sense, measure ``how hard it is'' 
to exclude the uniformity of some ultrafilter, on one hand,
or to omit the existence of a nonstandard element in some
elementary extension on the other hand.
 A  large part of our methods work for arbitrary
regular cardinals in place of $ \omega$;
in particular, at a certain point, we shall make good use of a notion 
whose importance has been hinted in Chang \cite{Ch} and which we call here
being ``$ \mu $-nonstandard''; in the particular case $ \mu= \omega $ we get back 
the classical notion.
These techniques  allow us  to provide a characterization of the
compactness properties of products of (possibly uncountable) regular cardinals with the order topology.
This seems to have some interest since, as far as we know, all previously known  results of this kind
have dealt with cardinals endowed with the discrete topology
(of course, the two situations coincide in the case of $ \omega$).

The following theorem has been proved in \cite{cpnw}
in the case when $\lambda$ is regular, with a slightly simpler condition 
in place of  (3) below.
We shall prove the theorem  here for arbitrary $\lambda$. All the relevant definitions shall be given shortly after the statement.

\begin{thm} \labbel{logb}  
The space  $ \omega^ \lambda $
is finally $\lambda$-compact if and only if 
$\mathcal L _{ \omega _1, \omega } $
is $ ( \lambda , \lambda  )$-compact.
More generally, if $\kappa \geq \lambda $ then 
the following conditions are equivalent.
 \begin{enumerate} 
  \item  
$ \omega^ \kappa $ is $[ \lambda , \lambda  ]$-compact.
\item
The language $\mathcal L _{ \omega _1, \omega } $
is $ \kappa  $-$( \lambda , \lambda  )$-compact.
  \item
$(\lambda, \lambda ) {\not\Rightarrow}\!^ \kappa \omega $.
 \end{enumerate} 
\end{thm}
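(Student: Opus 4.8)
The plan is to treat the combinatorial relation (3) as the central hub and to build two bridges to it: a \emph{topological} bridge $(1)\Leftrightarrow(3)$ and a \emph{model-theoretic} bridge $(2)\Leftrightarrow(3)$. The overall architecture is the one already used for regular $\lambda$ in \cite{cpnw}; what is genuinely new is that for singular $\lambda$ the naive ``complete accumulation point'' description of $[\lambda,\lambda]$-compactness is no longer available, so (3) must be stated in the present more elaborate form and both bridges re-erected with an analysis along a cofinal sequence of length $\cf\lambda$. The first assertion of the theorem is then read off as the instance $\kappa=\lambda$: since $\omega^\lambda$ has weight $\lambda$, every open cover has a subcover of size $\leq\lambda$, so for this space $[\lambda,\lambda]$-compactness and final $\lambda$-compactness coincide, and $\lambda$-$(\lambda,\lambda)$-compactness of $\mathcal L_{\omega_1,\omega}$ reduces to plain $(\lambda,\lambda)$-compactness.

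For the model-theoretic bridge $(2)\Leftrightarrow(3)$ I would use the standard correspondence between infinitary compactness and regular ultrafilters. The point linking the three ``$\omega$''s in the statement is that a countable conjunction in $\mathcal L_{\omega_1,\omega}$ forces \emph{countable incompleteness}, exactly as a function into the discrete space $\omega$ detects it. Concretely, from a set of $\leq\kappa$ many $\mathcal L_{\omega_1,\omega}$-sentences that is $(<\lambda)$-satisfiable but has no model one extracts, via an ultraproduct/limit construction, a uniform ultrafilter that is $(\lambda,\lambda)$-regular and countably incomplete, i.e.\ a witness to $(\lambda,\lambda)\Rightarrow^\kappa\omega$; conversely such a witness is turned into a consistent-in-pieces but unsatisfiable theory by coding the regularity family into countable disjunctions. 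This direction is largely insensitive to whether $\lambda$ is regular, so I expect it to go through by adapting the argument of \cite{cpnw} to the reformulated (3).

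The topological bridge $(1)\Leftrightarrow(3)$ is the heart of the matter. I would use the ultrafilter-convergence and accumulation characterization of $[\lambda,\lambda]$-compactness for products. Because $\omega$ is discrete, a function $f\colon\kappa\to\omega$ has a $D$-limit precisely when $D$ is countably complete, and $D$-compactness is productive; assembling this coordinatewise, a witness to $(\lambda,\lambda)\Rightarrow^\kappa\omega$ yields a family of $\lambda$ points of $\omega^\kappa$ admitting no accumulation of the required strength, hence, by complementation, an open cover of size $\lambda$ with no subcover of size $<\lambda$, so $(1)$ fails. For the converse one assumes $(1)$ and, given any candidate witness to the relation, shows that the accumulation guaranteed by $[\lambda,\lambda]$-compactness contradicts the regularity of the family, so $(3)$ holds.

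The main obstacle, and the place where singularity bites, is the converse $(3)\Rightarrow(1)$ (equivalently $(1)\Rightarrow(3)$ through the accumulation analysis) when $\cf\lambda<\lambda$. For regular $\lambda$ one reduces $[\lambda,\lambda]$-compactness to the existence of a single complete accumulation point of a $\lambda$-indexed set, which pairs cleanly with $(\lambda,\lambda)$-regularity; for singular $\lambda$ this reduction is false, and one must instead build the required point stagewise, producing partial limits at the $\cf\lambda$ many levels below $\lambda$ and gluing them. This is exactly the role I expect the ``$\mu$-nonstandard'' elements to play: they furnish, in a suitable elementary extension, the partial limit data at each $\mu<\lambda$, and the delicate step is to combine the $\cf\lambda$ many pieces into a single object while keeping the relevant ultrafilter \emph{uniform} on $\lambda$ --- that is, without inadvertently concentrating it on a cardinal below $\lambda$. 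Controlling this non-collapse across all cofinally many directions simultaneously is the technical crux; once it is handled, the reformulated relation (3) is precisely what makes the gluing succeed, and the two bridges close the equivalence.
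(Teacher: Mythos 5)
Your overall architecture (condition (3) as the hub, a topological bridge to (1), a model-theoretic bridge to (2), and the first assertion read off from the case $\kappa=\lambda$ via the weight-$\lambda$ observation) agrees with the paper, but both bridges, as you describe them, have genuine gaps. The serious one is the model-theoretic bridge. You identify a witness to $(\lambda,\lambda)\Rightarrow^{\kappa}\omega$ with ``a uniform ultrafilter that is $(\lambda,\lambda)$-regular and countably incomplete''; this inverts the quantifier structure of Definition \ref{defb}. A witness to $(\lambda,\lambda)\Rightarrow^{\kappa}\omega$ is a \emph{family} of $\kappa$ functions $f_\gamma\colon[\lambda]^{<\lambda}\to\omega$ such that \emph{every} ultrafilter $D$ covering $\lambda$ has some $f_\gamma(D)$ uniform; it is not a single ultrafilter. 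Indeed, a uniform, $(\lambda,\lambda)$-regular, countably incomplete ultrafilter exists outright in ZFC, so the statement you propose to extract from an unsatisfiable theory is a triviality and cannot be equivalent to the failure of (2). A single-ultrafilter reading of the principle is available only when $\kappa\geq\omega^{\lambda^{<\lambda}}$ (as the paper remarks after the proof), whereas the content of the theorem lies precisely at small $\kappa$, e.g.\ $\kappa=\lambda$: the count of $\kappa$ functions against $\kappa$ new symbols is the whole point. The paper instead interposes Theorem \ref{nonstb}: $(\lambda,\lambda)\not\Rightarrow^{\kappa}\omega$ holds iff every expansion $\m A$ of $\m A(\lambda,\omega)$ by at most $\kappa$ symbols has some $\m B\equiv\m A$ with an element covering $\lambda$ but no nonstandard element (proved via a Skolem hull of an ultrapower in one direction, and by generating an ultrafilter from the definable sets satisfied by the covering element in the other). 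Passing from that condition to (2) then requires, in the hard direction, reducing $\mathcal L_{\omega_1,\omega}$-satisfiability to first-order satisfiability: in models omitting nonstandard elements a countable conjunction $\bigwedge_{n}\varphi_n(\bar{x})$ can be replaced by $\forall y\,(V(y)\Rightarrow R_\psi(y,\bar{x}))$ for a new \emph{finitary} predicate $R_\psi$, and the $\lambda$-satisfiability of $\Gamma$ relative to $\Sigma$ must be coded inside a single expansion by a predicate $S$ with $\{z\mid S(s,z)\}\models\Sigma\cup\{\gamma_\alpha\mid\alpha\in s\}$ for each $s\in[\lambda]^{<\lambda}$, so that the covering element yields, by elementarity and relativization, a model of $\Sigma\cup\Gamma$. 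Neither this machinery nor a substitute for it appears in your sketch; ``coding the regularity family into countable disjunctions'' gives at best the easier direction, essentially Mr\'owka's.

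The topological bridge is also off, though less fatally. You describe the accumulation analysis for a $\lambda$-indexed family of points and then locate the crux in a stagewise construction of limits along a cofinal sequence of length $\cf\lambda$, glued while keeping an ultrafilter uniform, with the $\mu$-nonstandard elements supplying the partial limit data. This both misplaces the nonstandard elements (in the paper they belong entirely to the model-theoretic side, Theorem \ref{nonstb}) and solves a problem that the correct formulation of (3) has already dissolved. The point of the paper is that no gluing is needed: the ultrafilter-convergence characterization of $[\lambda,\lambda]$-compactness (Theorem \ref{caicb}) holds for \emph{all} $\lambda$, singular included, provided one indexes sequences by $[\lambda]^{<\lambda}$ and uses ultrafilters covering $\lambda$ instead of uniform ultrafilters over $\lambda$. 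Once (3) is phrased with exactly these objects, (1) $\Leftrightarrow$ (3) is a short coordinatewise computation (Corollary \ref{produfb}): a $[\lambda]^{<\lambda}$-indexed sequence in $\omega^\kappa$ \emph{is} a $\kappa$-family of functions $f_\gamma\colon[\lambda]^{<\lambda}\to\omega$, and having a $D$-limit in coordinate $\gamma$ is the same as $f_\gamma(D)$ failing to be uniform over $\omega$. The singularity of $\lambda$ is absorbed entirely by the choice of the index set $[\lambda]^{<\lambda}$; the uniformity-control problem you flag as the technical crux is exactly the kind of difficulty that this reformulation is designed to avoid, and your plan for overcoming it directly is not developed to the point where one could check it.
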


Unexplained notions and notation are standard; see, e.~g., 
Chang and Keisler \cite{CK}, Comfort and Negrepontis \cite{CN} and
Jech \cite{J}.  
Throughout, $\lambda$, $\mu$, $\kappa$ and $\nu$ are infinite cardinals, 
$\alpha$, $\beta$ and $\gamma$ are ordinals,
$X$ is a topological space and $D$ is an ultrafilter. A cardinal $\mu $ is 
also considered as a topological space
endowed either with the order topology,
or with the coarser \emph{initial interval} topology  $\iit$,
the topology consisting of the intervals of the kind $[0, \alpha )$ with $\alpha \leq \mu$.
No separation axioms are assumed throughout.
Products  of topological spaces are always assigned the Tychonoff topology.
A topological  space $X$ is
\emph{$[\mu, \lambda ]$-compact} if  every open 
cover of $X$ by at most $\lambda$  sets has a subcover by less than $\mu$  sets.
Final $\kappa$-compactness is equivalent to 
$[ \nu , \nu ]$-compactness for every 
$\nu \geq \kappa $. More generally,
$[\mu, \lambda ]$-compactness is equivalent to 
$[ \nu , \nu ]$-compactness for every 
$\nu $ such that $\mu \leq \nu \leq \lambda $.
The \emph{infinitary language} $\mathcal L _{ \omega _1, \omega   } $ is 
like first-order logic, except that we allow
conjunctions 
and disjunctions 
of countably many formulas. 
If $\Sigma$  and $\Gamma$ are sets of sentences of 
$\mathcal L _{ \omega _1, \omega } $
we say that $\Gamma$ is \emph{$\mu$-satisfiable
relative to $\Sigma$} if
$\Sigma \cup \Gamma'$
is satisfiable, for every 
$\Gamma' \subseteq \Gamma $ 
of cardinality $<\mu$.
If $\mu \leq \lambda $ we say that $\mathcal L _{ \omega _1, \omega } $ is  
\emph{$ \kappa  $-$( \lambda   , \mu  )$-compact}
if $\Sigma \cup \Gamma$
is satisfiable, whenever 
$|\Sigma| \leq \kappa  $, $|  \Gamma| \leq \lambda $
and $\Gamma$ is $\mu$-satisfiable
relative to $\Sigma$.
We had formerly introduced the notion of $ \kappa  $-$( \lambda , \mu )$-compactness
for arbitrary logics,  extending  notions by
C. C. Chang, H. J. Keisler, J. A. Makowsky, S. Shelah, A. Tarski, among others.
See the book edited by  Barwise and Feferman \cite{BF}, Caicedo \cite{Ca} 
and our \cite{cpnw} 
for references.  
If $ \kappa \leq \lambda $
then $ \kappa  $-$( \lambda , \mu   )$-compactness reduces to
the classical notion of $( \lambda , \mu   )$-compactness.
Notice the reversed order of the cardinal parameters in comparison with  
the corresponding topological property.
If $D$ is an ultrafilter over some set $I$ and $f \co I \to J$ is a function, $f(D)$ is the ultrafilter over $J$ 
defined by $Y \in f(D)$ if and only if $ f ^{-1}(Y) \in D $.
As usual, $[\lambda] ^{< \lambda }$ denotes the set of 
all subsets of $\lambda$ of cardinality $< \lambda $. 
We say that an ultrafilter $D$ over
$[\lambda] ^{< \lambda }$
\emph{covers $\lambda$} if 
 $\{ s \in [\lambda ] ^{< \lambda } \mid \alpha \in s\} \in D$,
for every  $\alpha \in \lambda $. 

\begin{definition} \labbel{defb}    
We shall denote by
$(\lambda, \lambda ) {\not\Rightarrow} (\mu_ \gamma ) _{ \gamma \in \kappa } $ 
the following statement.
 \begin{enumerate}   
\item[(*)]
For every sequence of functions 
$(f_ \gamma  ) _{ \gamma \in \kappa }$
such that  $f_ \gamma   \co [\lambda] ^{< \lambda }  \to \mu_ \gamma  $
for $\gamma \in \kappa$,
there is some ultrafilter $D$ over $[\lambda] ^{< \lambda } $ 
covering $\lambda$ such that
for no $\gamma \in \kappa $
$f_ \gamma (D)$ is uniform over $\mu_ \gamma $.    
 \end{enumerate}

We shall write 
$(\lambda, \lambda ) {\not\Rightarrow}\!^ \kappa  \mu$ 
when all the $\mu_ \gamma $'s in (*) are equal to $\mu$.

The negation of  $(\lambda, \lambda ) {\not\Rightarrow}\!^ \kappa  \mu$  is denoted by $( \lambda , \lambda) {\Rightarrow}\!^ \kappa  \mu$; similarly for
 $(\lambda, \lambda ) {\Rightarrow} (\mu_ \gamma ) _{ \gamma \in \kappa } $. 
\end{definition}

If $D$ is an ultrafilter over some set $I$, 
a point $x \in X$ is said to be a \emph{$D$-limit point} of
a sequence 
$(x _ i) _{i \in I } $ of elements of $X$ 
if 
 $\{ i \in I \mid x_i \in U\} \in D$,
for every open neighborhood $U$ of $x$.
To avoid
complex expressions in subscripts, 
we sometimes shall denote
a sequence 
$(x_i) _{i \in I} $ 
as 
 $ \langle  x_i \mid  i \in I \rangle  $.
The next theorem follows easily
from Caicedo \cite[Section 3]{Ca},
which extended, generalized and simplified former results by  A. R. Bernstein, J. Ginsburg and V. Saks,
among others. 
A detailed proof in even more general contexts can be found 
in Lipparini \cite[Proposition 32 (1) $\Leftrightarrow $  (7)]{tapp2}, 
taking $\mathcal F$ there to be the set of all singletons of $X$,
and in 
\cite[Theorem 2.3]{ufmeng},
taking $\lambda=1$ there. 

\begin{theorem} \labbel{caicb}
A  topological space $X$ is
 $[ \lambda , \lambda ]$-\brfrt compact 
if and only if for every sequence 
$ \langle  x_s \mid  s \in [\lambda] ^{< \lambda } \rangle $ 
of elements of $X$
there exists some ultrafilter $D$ over 
$[\lambda] ^{< \lambda }$
such that $D$ covers $\lambda$ and the sequence
has some $D$-limit point in $X$.  
 \end{theorem}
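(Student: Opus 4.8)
The plan is to work with the dual, closed-set form of $[\lambda,\lambda]$-compactness: a space $X$ is $[\lambda,\lambda]$-compact if and only if every family of at most $\lambda$ closed subsets of $X$ in which every subfamily of cardinality $<\lambda$ has nonempty intersection has nonempty total intersection. This reformulation is immediate on passing to complements, since a subfamily of closed sets with empty intersection corresponds to a subcover. I would then prove the two implications separately, the reverse one being routine and the forward one requiring some care about cardinalities.

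For the implication from the ultrafilter condition to $[\lambda,\lambda]$-compactness, I would argue by contradiction directly against the covering definition. Suppose $\{U_\alpha \mid \alpha \in \lambda\}$ is an open cover of $X$ (indexed by $\lambda$, possibly with repetitions) admitting no subcover of cardinality $<\lambda$. For each $s \in [\lambda]^{<\lambda}$ the subfamily $\{U_\alpha \mid \alpha \in s\}$ fails to cover, so I may choose $x_s \in X \setminus \bigcup_{\alpha \in s} U_\alpha$. Applying the hypothesis to $\langle x_s \mid s \in [\lambda]^{<\lambda}\rangle$ yields an ultrafilter $D$ over $[\lambda]^{<\lambda}$ covering $\lambda$ and a $D$-limit point $x$. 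Choosing $\beta$ with $x \in U_\beta$, the limit property gives $\{s \mid x_s \in U_\beta\} \in D$, while covering gives $\{s \mid \beta \in s\} \in D$; but these two sets are disjoint by the choice of $x_s$, a contradiction. Hence the desired subcover exists.

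The forward implication is the core. Given a sequence $\langle x_s \mid s \in [\lambda]^{<\lambda}\rangle$, for each \emph{finite} $T \subseteq \lambda$ I set $E_T = \{x_s \mid T \subseteq s\}$ and consider the closed sets $\overline{E_T}$. The family $\{\overline{E_T} \mid T \in [\lambda]^{<\omega}\}$ has cardinality $\lambda$, and it has the $<\lambda$-intersection property: given fewer than $\lambda$ of the $T$'s, their union $S$ still lies in $[\lambda]^{<\lambda}$ (a union of $<\lambda$ finite sets), so $x_S \in E_T \subseteq \overline{E_T}$ for each $T$ in the subfamily. By the closed-set form of $[\lambda,\lambda]$-compactness there is a point $x \in \bigcap_{T \in [\lambda]^{<\omega}} \overline{E_T}$. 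I then obtain $D$ by checking that the family consisting of all sets $\{s \mid \alpha \in s\}$, for $\alpha \in \lambda$, together with all sets $\{s \mid x_s \in U\}$, for $U$ an open neighbourhood of $x$, has the finite intersection property: any finite intersection reduces to $\{s \mid T \subseteq s,\ x_s \in U\}$ with $T$ finite and $U$ an open neighbourhood of $x$, and this is nonempty precisely because $x \in \overline{E_T}$. Any ultrafilter $D$ extending this family covers $\lambda$ and has $x$ as a $D$-limit point.

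The main obstacle, and the only genuinely delicate point, is the cardinality bookkeeping in the forward direction. If one naively demanded $x \in \overline{E_T}$ for \emph{all} $T \in [\lambda]^{<\lambda}$, the relevant closed family would have cardinality $|[\lambda]^{<\lambda}|$, which may exceed $\lambda$ and so lie beyond the reach of $[\lambda,\lambda]$-compactness. The key observation is that the requirement ``$D$ covers $\lambda$'' constrains only the individual sets $\{s \mid \alpha \in s\}$, so the filter they generate has a base of \emph{finite} intersections; consequently only finite $T$ enter the finite-intersection-property computation, and the closed family stays of cardinality $\lambda$. Notably, the estimate that a union of fewer than $\lambda$ finite sets has cardinality $<\lambda$ holds whether $\lambda$ is regular or singular, which is exactly what allows the argument to run uniformly for arbitrary $\lambda$.
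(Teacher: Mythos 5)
Your proof is correct and complete. Note that the paper does not actually prove Theorem~\ref{caicb}: it defers to Caicedo \cite{Ca} and to detailed proofs in \cite{tapp2} and \cite{ufmeng}, so there is no internal proof to compare against line by line; what you have written supplies an argument the paper omits, and it is the standard one (in the Bernstein--Ginsburg--Saks--Caicedo tradition) that those references contain. Your backward direction is the routine contradiction: if $x \in U_\beta$ is a $D$-limit of the $x_s$'s, then $\{ s \mid x_s \in U_\beta \}$ and $\{ s \mid \beta \in s \}$ are disjoint yet both belong to $D$. The forward direction is where the content lies, and you isolate and resolve the only delicate point correctly: demanding $x \in \overline{E_T}$ for all $T \in [\lambda]^{<\lambda}$ would produce a family of $|[\lambda]^{<\lambda}|$-many closed sets, beyond the reach of $[\lambda,\lambda]$-compactness, whereas restricting to finite $T$ keeps the family of cardinality $\lambda$; and this restriction suffices because the ultrafilter is obtained by extending a family whose finite-intersection-property check involves only finitely many sets $\{ s \mid \alpha \in s \}$ at a time, hence only finite $T$'s. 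Your cardinality estimate --- a union of fewer than $\lambda$ finite sets has cardinality less than $\lambda$, for $\lambda$ regular or singular alike --- is precisely what makes the argument uniform in $\lambda$, which is the case of interest in this paper. I see no gap.
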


\begin{cor} \labbel{produfb}
Suppose that
$(\mu_ \gamma ) _{ \gamma \in \kappa } $
 is a sequence of regular cardinals
and that each $\mu _ \gamma $ is endowed either with the
order topology or with the $\iit$ topology. 
Then 
$\prod _{ \gamma \in \kappa } \mu _ \gamma   $ is  $[ \lambda, \lambda ]$-compact
if and only if $( \lambda , \lambda) {\not\Rightarrow} (\mu_ \gamma ) _{ \gamma \in \kappa } $. 
 \end{cor}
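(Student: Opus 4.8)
The plan is to combine Theorem~\ref{caicb} with a coordinatewise analysis of $D$-limit points in the product, reducing the whole statement to a single fact about when a pushforward ultrafilter on a regular cardinal has a limit point. First I would set up the dictionary between sequences of points and sequences of functions. A point of $\prod_{\gamma\in\kappa}\mu_\gamma$ is just a choice $x(\gamma)\in\mu_\gamma$ for each $\gamma$, so a sequence $\langle x_s\mid s\in[\lambda]^{<\lambda}\rangle$ of points corresponds bijectively to a sequence of functions $(f_\gamma)_{\gamma\in\kappa}$ with $f_\gamma\co[\lambda]^{<\lambda}\to\mu_\gamma$, via $f_\gamma(s)=x_s(\gamma)$. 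Since the product carries the Tychonoff topology, basic open sets depend on only finitely many coordinates, and $D$ is closed under finite intersections, a point $y$ is a $D$-limit point of $\langle x_s\rangle$ if and only if, for every $\gamma$, the coordinate $y(\gamma)$ is a $D$-limit point of $\langle x_s(\gamma)\mid s\in[\lambda]^{<\lambda}\rangle$. Unwinding the definition of $f_\gamma(D)$, the latter says precisely that every open neighborhood of $y(\gamma)$ lies in $f_\gamma(D)$, that is, that $y(\gamma)$ is a limit point of the ultrafilter $f_\gamma(D)$ on $\mu_\gamma$. Hence $\langle x_s\rangle$ admits a $D$-limit point if and only if $f_\gamma(D)$ has a limit point in $\mu_\gamma$ for every $\gamma$, the point $y$ being assembled from the coordinates by choice.

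The key lemma, and the step where regularity of the $\mu_\gamma$ is essential, is the following: for $\mu$ a regular cardinal endowed with the order topology or the $\iit$ topology, an ultrafilter $E$ over $\mu$ has a limit point in $\mu$ if and only if $E$ is not uniform. For the forward direction I would observe that in both topologies $[0,\alpha]=[0,\alpha+1)$ is an open neighborhood of any $\alpha<\mu$; so if $\alpha$ is a limit point of $E$ then $[0,\alpha]\in E$, and $|[0,\alpha]|=|\alpha|+1<\mu$, whence $E$ is not uniform. For the converse, suppose $A\in E$ with $|A|<\mu$. Regularity of $\mu$ then gives $\delta:=\sup A<\mu$, so $[0,\delta]\in E$. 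In the $\iit$ topology $\delta$ is directly a limit point, since every basic neighborhood $[0,\beta)$ of $\delta$ (with $\beta>\delta$) contains $[0,\delta]\in E$. In the order topology the initial segment $[0,\delta]=\delta+1$ is compact, so the trace of $E$ on $[0,\delta]$ converges to some point, which is then a limit point of $E$ in $\mu$.

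I expect this lemma to be the main obstacle, precisely because the implication ``$E$ not uniform $\Rightarrow$ $E$ has a limit point'' fails without regularity: it is exactly the regularity of $\mu$ that bounds $\sup A$ strictly below $\mu$ and that makes the closed bounded initial segments compact. (The uniform case is the familiar phenomenon that $\mu$ with the order topology fails to be compact precisely because the ``missing'' limit point is $\mu$ itself, which is not a point of the space.)

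Finally I would assemble the pieces. By Theorem~\ref{caicb}, $\prod_{\gamma\in\kappa}\mu_\gamma$ is $[\lambda,\lambda]$-compact if and only if every sequence of points admits an ultrafilter $D$ over $[\lambda]^{<\lambda}$ covering $\lambda$ with a $D$-limit point. By the coordinatewise reduction of the first paragraph this holds if and only if for every sequence $(f_\gamma)_{\gamma\in\kappa}$ of functions $f_\gamma\co[\lambda]^{<\lambda}\to\mu_\gamma$ there is such a $D$ with $f_\gamma(D)$ having a limit point for every $\gamma$; and by the key lemma ``$f_\gamma(D)$ has a limit point'' is equivalent to ``$f_\gamma(D)$ is not uniform.'' The resulting statement, namely that for every $(f_\gamma)_{\gamma\in\kappa}$ there is an ultrafilter $D$ covering $\lambda$ for which no $f_\gamma(D)$ is uniform, is verbatim the condition $(\lambda,\lambda)\not\Rightarrow(\mu_\gamma)_{\gamma\in\kappa}$ of Definition~\ref{defb}, which finishes the proof.
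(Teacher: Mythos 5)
Your proposal is correct and follows essentially the same route as the paper: the identification of point-sequences with function-sequences, Theorem~\ref{caicb}, the coordinatewise characterization of $D$-limit points in a Tychonoff product, and the equivalence (for regular $\mu_\gamma$, in either topology) between ``$f_\gamma(D)$ has a limit point'' and ``$f_\gamma(D)$ is not uniform.'' The only difference is one of detail: the paper compresses your key lemma into a single sentence (existence of a $D$-limit point amounts to some $\{s \mid \pi_\gamma(x_s) < \delta_\gamma\}$ lying in $D$, which by regularity is non-uniformity), whereas you spell out its proof, including the compactness of the initial segment $[0,\delta]$ needed for the order-topology case.
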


\begin{proof}
Let $X=\prod _{ \gamma \in \kappa } \mu _ \gamma $ and, for 
$ \gamma \in \kappa$, let 
$\pi_ \gamma \co X \to \mu_ \gamma $ be the natural projection.
A sequence of functions as in the first line of (*) in Definition \ref{defb} 
can be naturally identified with a sequence
$ \langle  x_s \mid  s \in [\lambda] ^{< \lambda } \rangle $  of elements 
of $X $, by posing
$\pi _ \gamma (x_ s)=f_ \gamma ( s )  $. 
By Theorem \ref{caicb},
$X$
is $[ \lambda, \lambda ]$-compact if and only if, 
for every  sequence 
$ \langle  x_s \mid  s \in [\lambda] ^{< \lambda } \rangle $ 
of elements of $X$, there is an ultrafilter $D$  over $[\lambda] ^{< \lambda }$  covering $\lambda$ and 
such that 
$ \langle  x_s \mid  s \in [\lambda] ^{< \lambda } \rangle $ 
has a  $D$-limit point in $ X$.
Since a sequence in a product  has a $D$-limit point 
 if and only if each component has a $D$-limit point,
 the above condition holds if and only if,
 for each $\gamma \in \kappa $,
$ \langle  \pi _ \gamma (x_s) \mid  s \in [\lambda] ^{< \lambda } \rangle $ 
 has a $D$-limit point in $\mu_ \gamma $, and
this happens if and only if,   for each $\gamma \in \kappa $,
there is $\delta_ \gamma \in \mu _ \gamma $
such that 
$\{  s \in [\lambda] ^{< \lambda } \mid \pi _ \gamma (x_ s) < \delta _ \gamma \} \in D$---no matter whether $\mu _ \gamma $ has the order or the $\iit$ topology.
Under the  mentioned identification, and since 
every $\mu_ \gamma $ is regular, 
this
means exactly that  each  $f_ \gamma (D)$ fails to be  uniform over $\mu_ \gamma $.
 \end{proof}

\begin{definition} \labbel{a}   
We now need to  consider
a model $ \m A( \lambda, \mu )$
which contains both a copy of 
$ \langle [\lambda] ^{< \lambda }, {\subseteq,}  \{\alpha  \}  \rangle _{ \alpha \in \lambda } $ 
and a copy of
$\langle  \mu , < , \beta  \rangle _{ \beta \in \mu  } $,
where 
the $ \{\alpha  \} $'s 
and the
$\beta  $'s
are interpreted as constants.
Though probably the most elegant way to accomplish this is by means of 
a two-sorted model, we do not want to introduce technicalities and simply assume
that $A= [\lambda] ^{< \lambda } \cup \mu  $
and that $[\lambda] ^{< \lambda }$ and $\mu $ are interpreted,
respectively, by unary predicates  
$U$ and $V$. 
In details, we let
$ \m A( \lambda, \mu )  = 
\langle A, U, V  , \subseteq , <,  \{\alpha  \} , \beta  \rangle
 _{ \alpha \in \lambda, \beta \in \mu  } $,
where 
$U  (s)$ holds in $\m A( \lambda, \mu )$ if and only if $s \in [\lambda] ^{< \lambda }$ and
$V (c)$ holds in $\m A( \lambda, \mu )$ if and only if $ c \in \mu $.
By abuse of notation we shall not distinguish between symbols and
their interpretations.
If $\m A$ is an expansion of $\m A( \lambda, \mu )$ and $\m B \equiv \m A$ (that is, $\m B$ is \emph{elementarily equivalent}
to $\m A$), we say that $b \in B$  \emph{covers} $\lambda$  
if $U(b)$ and $ \{ \alpha  \} \subseteq  b$ hold in $\m B$, for every $\alpha \in \lambda $.
We say that    
$c \in B$ is \emph{$\mu$-nonstandard} 
if $  V(c) $ and $\beta < c $ hold  in $\m B$, for every $\beta \in \mu$.
Of course, in the case $\mu= \omega $,
we get the usual notion of a nonstandard element.
Notice that if $D$ is an ultrafilter over
$[\lambda] ^{< \lambda }$ then $D$ covers $\lambda$ in the ultrafilter sense
(cf. the sentence immediately before Definition \ref{defb}) 
if and only if the $D$-class $[Id]_D$ of the identity 
on 
$[\lambda] ^{< \lambda }$
in the ultrapower
$\prod _D \m A( \lambda, \mu )$
covers $\lambda$ in the present sense.
Moreover, if $\mu $ is regular,
then an ultrafilter $D$ over $\mu $ is uniform
if and only if  
$\prod _D \m A( \lambda, \mu )$ has a $\mu $-nonstandard element.
 \end{definition}

\begin{thm} \labbel{nonstb}
If 
$\kappa \geq \sup \{\lambda, \mu \}  $  then 
$(\lambda, \lambda ){\not\Rightarrow}\!^ \kappa  \mu$
if and only if for 
every expansion $\m A$ of 
$ \m A( \lambda, \mu )  $ 
with at most $\kappa$ new symbols
(equivalently, symbols and sorts), there is 
$\m B \equiv \m A$ 
such that $\m B$ has an element covering $\lambda$ 
but no $\mu$-nonstandard element.
 \end{thm}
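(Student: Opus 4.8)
The plan is to prove the two implications separately, translating between the combinatorial statement of Definition \ref{defb} and the model-theoretic statement through the dictionary set up in Definition \ref{a} (membership of the identity class, covering, uniformity versus $\mu$-nonstandardness).

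For the easier direction, assume the model-theoretic condition and fix a family $(f_\gamma)_{\gamma\in\kappa}$ with $f_\gamma\co[\lambda]^{<\lambda}\to\mu$. Expand $\m A(\lambda,\mu)$ by $\kappa$ unary function symbols $F_\gamma$, interpreting $F_\gamma$ as $f_\gamma$ on $U$ and as the constant $0$ on $V$; these are at most $\kappa$ new symbols, so the hypothesis applies to the resulting expansion $\m A$. Let $\m B\equiv\m A$ have a covering element $b$ and no $\mu$-nonstandard element. Since $\m A\models\forall x(U(x)\to V(F_\gamma(x)))$ and $U(b)$ holds, each $F_\gamma(b)$ is a $V$-element; as it is not $\mu$-nonstandard and $<$ is linear on $V$, there is a constant $\beta_\gamma<\mu$ with $F_\gamma(b)\le\beta_\gamma$. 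I would then check that $\{\{s:\alpha\in s\}\mid\alpha<\lambda\}\cup\{\{s:f_\gamma(s)\le\beta_\gamma\}\mid\gamma<\kappa\}$ has the finite intersection property: any finite existential statement $\exists x(U(x)\wedge\bigwedge_i\{\alpha_i\}\subseteq x\wedge\bigwedge_j F_{\gamma_j}(x)\le\beta_{\gamma_j})$ is witnessed by $b$ in $\m B$, hence holds in $\m A$, producing an $s\in[\lambda]^{<\lambda}$ in the corresponding intersection. Extending to an ultrafilter $D$ gives one covering $\lambda$, while $\{s:f_\gamma(s)\le\beta_\gamma\}\in D$ places $[0,\beta_\gamma]$, of cardinality $<\mu$, into $f_\gamma(D)$; thus no $f_\gamma(D)$ is uniform, which is exactly $(\lambda,\lambda){\not\Rightarrow}\!^\kappa\mu$. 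This direction needs no regularity of $\mu$.

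For the converse, assume $(\lambda,\lambda){\not\Rightarrow}\!^\kappa\mu$ and let $\m A$ be any expansion of $\m A(\lambda,\mu)$ with at most $\kappa$ new symbols. First I would Skolemize $\m A$ to $\m A^+$, still with $\le\kappa$ symbols, so that Skolem hulls are elementary submodels. The crucial move is to let $(f_\gamma)_{\gamma\in\kappa}$ enumerate the $\le\kappa$ unary functions $[\lambda]^{<\lambda}\to\mu$ that arise, after totalizing to force values into $V$, from the unary terms of $\m A^+$ in the single variable $x$ (the named constants being absorbed into the terms). Applying the hypothesis yields an ultrafilter $D$ over $[\lambda]^{<\lambda}$ covering $\lambda$ with no $f_\gamma(D)$ uniform. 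I then set $\m B$ to be the reduct to the language of $\m A$ of the Skolem hull $\m B^+\preceq\prod_D\m A^+$ generated by $[Id]_D$; so $\m B\equiv\m A$, and $[Id]_D$ covers $\lambda$ by the ultrafilter-to-ultrapower translation recorded in Definition \ref{a}.

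It remains to rule out $\mu$-nonstandard elements of $\m B$, and this is the step I expect to be the main obstacle. Since $\m B^+$ is the Skolem hull of the single element $[Id]_D$, every element of $\m B^+$ is $[t]_D$ for a unary term $t$; and if it is a $V$-element, then after totalization $t$ agrees $D$-almost-everywhere with one of the chosen $f_\gamma$, so the element equals $[f_\gamma]_D$. By the correspondence in Definition \ref{a}, $[f_\gamma]_D$ is $\mu$-nonstandard exactly when $f_\gamma(D)$ is uniform over $\mu$; since no $f_\gamma(D)$ is uniform, $\m B$ has no $\mu$-nonstandard element. The delicate points are thus (i) that a single $\kappa$-indexed family, built from terms and closed under the case-totalization that pushes values into $V$, already captures \emph{every} $V$-element of the hull, and (ii) the passage from non-uniformity to the absence of nonstandard elements, which is precisely the equivalence recorded in Definition \ref{a} and is transparent when $\mu$ is regular---the case $\mu=\omega$ relevant to Theorem \ref{logb}.
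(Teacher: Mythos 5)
Your proposal is correct and takes essentially the same route as the paper's own proof: one direction via an ultrapower by the ultrafilter supplied by $(\lambda, \lambda ){\not\Rightarrow}\!^ \kappa \mu$ followed by the Skolem hull of $[Id]_D$ (with non-uniformity blocking $\mu$-nonstandard elements, using regularity of $\mu$ exactly where the paper does), the other by extracting from $\m B$ an ultrafilter over $[\lambda] ^{< \lambda }$ via sets defined by formulas satisfied by the covering element $b$. The only differences are cosmetic: you verify the finite intersection property directly for just the sets you need, where the paper takes all $b$-definable sets $Z_\varphi$ at once, and you make explicit the totalization of term functions that the paper absorbs into the phrase ``definable functions.''
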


 \begin{proof} 
Suppose that $(\lambda, \lambda ){\not\Rightarrow}\!^ \kappa  \mu$
and let $\m A$ be an expansion of 
$ \m A( \lambda, \mu )  $
with at most $\kappa$ new symbols and sorts.
Without loss of generality we may assume that 
$\m A$ has Skolem functions, 
since this adds at most $\kappa \geq \sup \{\lambda, \mu \}  $ new symbols.
Enumerate as $(f_ \gamma  ) _{ \gamma \in \kappa }$
all the functions
from  $[\lambda] ^{< \lambda }$ to $  \mu$
which are definable in $\m A$
(repeat occurrences, if necessary), 
and let $D$ be the ultrafilter given by 
 $(\lambda, \lambda ) {\not\Rightarrow}\!^ \kappa  \mu$.
Let $\m C$ be the ultrapower $\prod _D \m A$.
By the remark before the statement of the theorem,
$b = [Id]_D$ is an element in 
$\m C$ which covers $\lambda$.
 Let $\m B$ be the Skolem hull of 
$ \{ b \} $ in $\m C$; thus $\m B \equiv \m C = \prod _D \m A \equiv \m A$,
 and $b$ covers $\lambda$ in $\m B$. 
Had $\m B$ a $\mu$-nonstandard element $c$,
there  would be $\gamma \in \kappa $ such that $c= f_ \gamma (b)$,
by the definition of $\m B$.  Thus 
$c= f_ \gamma ([Id]_D)= [f_ \gamma ]_D$,
but this would imply
that $f_ \gamma (D)$ is uniform over $ \mu$,
since $\mu $ is regular,  
contradicting the choice of $D$.  

For the converse, suppose that $(f_ \gamma  ) _{ \gamma \in \kappa }$
is a sequence of functions from $ [\lambda] ^{< \lambda }$ to $\mu$.
Let $\m A$ be the expansion of 
$ \m A( \lambda, \mu )  $ 
 obtained  
by adding the $f_ \gamma $'s as unary functions.
Notice that we have no need to introduce new sorts.
By assumption, there is $ \m B \equiv  \m A$ 
 with an element $b$ covering $\lambda$ 
but without  $\mu$-nonstandard elements.
For every formula $\varphi(y)$ in the vocabulary of
$\m A$ and
with exactly one free variable $y$, 
let $Z_ \varphi = \{ s\in [\lambda] ^{< \lambda }
 \mid \varphi(s) \text{ holds in } \m A\}$.
Put 
$E=\{ Z_ \varphi \mid \varphi \text{ is as above and } 
\varphi(b) \text{ holds in  } \m B \}$. 
$E$ has trivially the finite intersection property, thus it can be extended to some ultrafilter 
$D$ over $[\lambda] ^{< \lambda }$. 
For each $\alpha \in \lambda $,  considering the formula
$ \varphi_ \alpha \co  \{ \alpha  \} \subseteq y $,
we get that 
$ Z _{ \varphi _ \alpha } = \{ s\in [\lambda] ^{< \lambda }
 \mid \alpha \in s \} \in E \subseteq D$, thus
$D$ covers $\lambda$.  
Let $\gamma \in \kappa $. Since  $\m B$ has no 
 $\mu$-nonstandard element,
there is $\beta < \mu$ such that 
$  f_ \gamma (b)< \beta  $  holds in  $  \m B$.  
Letting $\varphi _ \gamma ( y)$
be $  f_ \gamma (y)< \beta  $, we get that
 $Z _{  \varphi_ \gamma } = 
\{ s\in [\lambda] ^{< \lambda } \mid  f_ \gamma (s ) < \beta \}
\in E \subseteq D$, 
thus 
$[0, \beta ) \in f_ \gamma (D)$, 
proving 
that 
$  f_ \gamma (D)$ is not uniform over $\mu$.
\end{proof} 

Theorem \ref{nonstb} explains the reason why we have used the negation of an implication sign in the notation 
$(\lambda, \lambda ){\not\Rightarrow}\!^ \kappa  \mu$.
 The principle asserts that,
modulo possible expansions, the existence of an element 
covering $\lambda$ does \emph{not} necessarily \emph{imply}
the existence of a $\mu $-nonstandard element. 
Similarly, 
$(\lambda, \lambda ){\not\Rightarrow}\!^ \kappa  \mu$
is equivalent to the statement that 
$[ \lambda , \lambda  ]$-compactness of a product of
$\kappa$-many topological spaces does not imply
$[ \mu  , \mu   ]$-compactness of a factor.  
See Proposition \ref{tuttxb}.

\begin{proof}[Proof of Theorem \ref{logb}]
The first statement is immediate from the  case $\kappa= \lambda $
of (1) $\Leftrightarrow $  (2), since 
$ \omega^ \lambda $
is finally $\lambda^+$-compact, 
having a base of cardinality $\lambda$.
The equivalence of (1) and (3) is the particular case of 
Corollary  \ref{produfb} when all $\mu_ \gamma $'s equal $ \omega$. Thus, 
in view of Theorem \ref{nonstb}, it is enough to prove
that (2) is equivalent to the necessary and sufficient condition given there for 
$( \lambda ,\lambda) {\not\Rightarrow}\!^ \kappa \omega $.
For the simpler direction,
suppose that 
 $\mathcal L _{ \omega _1, \omega } $
is $ \kappa  $-$( \lambda , \lambda  )$-compact
 and that
 $\m A$ is an expansion of 
$ \m A( \lambda, \omega  )  $ 
with at most $\kappa$ new symbols.
Let $\Sigma$ be the elementary (first order) theory of 
$\m A$ plus an   $\mathcal L _{ \omega _1, \omega } $ sentence
asserting that there exists no nonstandard element
and let $\Gamma = \{ \{ \alpha \} \subseteq b \mid \alpha \in \lambda  \} $. 
By applying 
$ \kappa  $-$( \lambda , \lambda  )$-compactness of  
$\mathcal L _{ \omega _1, \omega } $ to the above sets of sentences
we get a model $\m B$ as requested by the condition in  
Theorem \ref{nonstb}. 

The reverse direction is a variation on a standard reduction argument.
Suppose that the condition
in Theorem \ref{nonstb} holds. 
If $\m A$ is a many-sorted expansion of the model
$ \m A( \lambda, \omega  )  $ introduced in  Definition \ref{a}
then,
for every $\m B \equiv \m A$
such that  $\m B$ 
has no  nonstandard element, 
a formula $\psi$ of $\mathcal L _{ \omega _1, \omega } $ of 
the form $\bigwedge _{n \in \omega } \varphi_n (\bar{x})  $ 
is equivalent  
to $\forall y (V(y) \Rightarrow R_ \psi(y, \bar{x}))$
in some expansion $\m B^+$ of $\m B$
with a newly introduced relation 
$R_ \psi$ such that 
$ \forall  \bar{x} (R_ \psi(n, \bar{x}) \Leftrightarrow \varphi_n (\bar{x}) )$
holds in  $\m B^+$, for every 
$ n \in \omega$. 
Here we are using in an essential way the fact that
in  a sentence of $\mathcal L _{ \omega _1, \omega } $  we can quantify away
only a finite number of variables, hence we can do 
with a finitary relation $R_ \psi$.
Thus,
given 
 $\Sigma  $ 
and $\Gamma $ sets of 
sentences as in the definition of $ \kappa  $-$( \lambda , \lambda  )$-compactness, 
 assuming that we work in models 
without nonstandard elements,
iterating the above procedure for all subformulas of the sentences
under consideration
and working in some appropriately expanded vocabulary, 
 we may reduce all the relevant satisfaction conditions to the case in which 
 $\Sigma  $ 
and $\Gamma $ are sets of first order sentences.
In the situation at hand we need to add to $\Sigma$ all the sentences of the 
form $ \forall \bar{x}(R_ \psi(n, \bar{x}) \Leftrightarrow \varphi_n (\bar{x})) $
as above, but easy computations show that we still have $|\Sigma| \leq \kappa  $, since $\kappa \geq \lambda $. 
If 
$\Gamma = \{ \gamma _ \alpha \mid \alpha \in \lambda  \} $ is $\lambda$-satisfiable relative to $\Sigma$, 
construct a many-sorted expansion 
$ \m A$
of  $ \m A( \lambda, \omega  )  $
  with a further new binary relation
$S$ such that, for every $ s \in [\lambda] ^{< \lambda } $,
  $\{ z \in A \mid S( s, z) \}$ 
models $\Sigma \cup \{ \gamma _ \alpha \mid \alpha \in s  \} $.
This is possible, since $\Gamma$ is $\lambda$-satisfiable
relative to $\Sigma$.
If  $\m B \equiv \m A$ is given by
$( \lambda , \lambda) {\not\Rightarrow}\!^ \kappa \omega $
 and $b \in B$ covers  $\lambda$ then  
$\{ z \in B \mid S( b , z) \}$ models $\Sigma \cup \Gamma$.
Indeed,
 for every $\alpha \in \lambda $ the sentence
$ \forall w( \{ \alpha \} \subseteq w 
\wedge U(w)
\Rightarrow 
\gamma _ \alpha ^{S(w, -)}) $ is satisfied in 
$\m A$ hence, 
by elementarity,
it is  satisfied in $\m B$, too,
where by 
$\gamma _ \alpha ^{S(w, -)} $
we denote a \emph{relativization} 
of $\gamma _ \alpha $ to $ S(w, -) $,
that is, a sentence such that  if $\m C$ is a model, $c \in C$  and
 $\{ z \in C \mid S( c , z) \}$ is itself a model 
for an appropriate vocabulary, then
$\gamma _ \alpha ^{S(c, -)} $ is satisfied in $\m C$ 
if and only if 
$\gamma _ \alpha $
is satisfied in
$\{ z \in C \mid S( c , z) \}$.
Similarly, for every  $\sigma \in \Sigma $,
 $ \forall w( 
 U(w)
\Rightarrow 
\sigma ^{S(w, -)}) $ is satisfied in 
$\m A$ hence 
it is  satisfied in $\m B$.
See the book edited by  Barwise and Feferman \cite{BF}
for further technical details, 
in particular about relativization and about 
dealing with constants.
Notice also that in the above proof we do need the many-sorted (or relativized)
version of the condition in Theorem \ref{nonstb},
since when $\kappa$ is large the models witnessing the $\lambda$-satisfiability  
of $\Gamma$ relative to $\Sigma$
might have cardinality exceeding the cardinality of the base set of
$ \m A( \lambda, \omega  )  $.
\end{proof}

In \cite{cpnw} we have proved results similar to those presented here, 
but we have dealt mostly with regular $\lambda$.
In that situation we could
do with a principle simpler than  
$(\lambda, \lambda ) {\not\Rightarrow} (\mu_ \gamma ) _{ \gamma \in \kappa } $;
we have denoted that principle by 
$ \lambda  {\not\Rightarrow} (\mu_ \gamma ) _{ \gamma \in \kappa } $.
The definition is essentially as Definition \ref{defb},
except for the fact that this time we use uniform ultrafilters over $\lambda$
rather than ultrafilters over $[\lambda] ^{< \lambda } $ covering $\lambda$. 
For $\lambda$ regular the two principles are easily seen to be equivalent.
Indeed, if $\lambda$ is regular,
$f_1 \co [\lambda] ^{< \lambda } \to \lambda $ 
is defined by $f_1(s)= \sup s$, and
an ultrafilter $D$ over $[\lambda] ^{< \lambda } $ covers $\lambda$ 
then $f_1(D)$ is uniform over $\lambda$.
The assumption that $\lambda$ is regular is needed to get that the range of
$f_1$ is contained in $\lambda$. 
On the other hand, the function  $f_2 \co   \lambda\to  [\lambda] ^{< \lambda } $  
which assigns to $\alpha \in \lambda $ the set
$\{ \beta \in \lambda \mid \beta < \alpha  \}$ is such that
if an ultrafilter $D$ over $\lambda$ is uniform
then $f_2(D)$  over $[\lambda] ^{< \lambda } $
covers $\lambda$.
Of course, under 
the standard nowadays usual identification of an ordinal
with the set of all smaller ordinals, 
 $f_2 ( \alpha ) = \alpha $.
 Using $f_1$ and  $f_2$
as tools for trasferring  
 from $[\lambda] ^{< \lambda } $ to $\lambda$ and conversely,
one immediately  sees that
 $ \lambda  {\not\Rightarrow} (\mu_ \gamma ) _{ \gamma \in \kappa } $
implies
$(\lambda, \lambda ) {\not\Rightarrow} (\mu_ \gamma ) _{ \gamma \in \kappa } $
for every $\lambda$, and that the two principles
 are equivalent for $\lambda$ regular.

The above principles
are interesting only for small values of $\kappa$.
Indeed if $\kappa \geq \mu ^{ \lambda ^{< \lambda } } $
then  $( \lambda ,\lambda) {\not\Rightarrow}\!^ \kappa \mu$ is equivalent to the statement that there exists an ultrafilter  over $[\lambda] ^{< \lambda } $ 
 covering $\lambda$ such that for no function 
$f \co [\lambda] ^{< \lambda } \to \mu  $
$f(D)$ is uniform over $\mu $. This is because
there are exactly $\mu ^{ \lambda ^{< \lambda } } $
functions from 
$[\lambda] ^{< \lambda } $ to $\mu $.
Thus while in general we obtain a stronger
statement when we increase $\kappa$ 
in $( \lambda ,\lambda) {\not\Rightarrow}\!^ \kappa \mu$,
at the  point $\kappa = \mu ^{ \lambda ^{< \lambda } } $
we have already reached the strongest possible notion.
Recall that an ultrafilter $D$ over a set $I$ is \emph{$( \lambda , \lambda ) $-regular} if 
there is a function $f \co I \to [\lambda] ^{< \lambda }  $ such that 
$f(D)$ covers $\lambda$; and that $D$ is  \emph{$ \mu $-decomposable} if 
there is a function $f \co I \to \mu  $ such that 
$f(D)$ is uniform over $ \mu $.  
Hence for $\kappa \geq \mu ^{ \lambda ^{< \lambda } } $
we get that $( \lambda ,\lambda) {\not\Rightarrow}\!^ \kappa \mu$
is equivalent to the statement that there is a 
$( \lambda , \lambda ) $-regular not
 $\mu $-decomposable ultrafilter.
The problem of the existence of such ultrafilters
is connected with difficult set-theoretical problems involving
large cardinals, forcing and pcf-theory, and has been widely studied,
sometimes in equivalent formulations. See 
\cite{mru} for more information.
In a couple of papers we have 
somewhat attempted a study of
the more comprehensive (hence more difficult!)  relation $( \lambda ,\lambda) {\not\Rightarrow}\!^ \kappa \mu$.
References can be found in \cite{mru,cpnw}.
Roughly, while for large $\kappa$ we get notions related to measurability,
on the other hand, for smaller values of $\kappa$ we get corresponding variants of weak compactness,
as the present note itself exemplifies.
Notice that in some previous works 
we had given the definition of, say, 
 $(\lambda, \lambda ) {\not\Rightarrow}\!^ \kappa \mu$
by means of the equivalent condition
given here by Theorem \ref{nonstb},
in which the assumption
$ \kappa \geq \sup \{ \lambda, \mu  \} $ is made. 
Hence in some places  the notation we have used
 might be not consistent with the present one
(but only when small values of $\kappa$
are taken into account).

Finally, expanding a remark we have  presented in \cite{cpnw}, we notice that, though we have stated our topological results in terms of products of cardinals,
 they can be reformulated in a way that  involves  arbitrary
products of topological spaces.

\begin{proposition} \labbel{tuttxb}
If
$(\mu_ \gamma ) _{ \gamma \in \kappa } $
 is a sequence of regular cardinals then the following conditions are equivalent. \begin{enumerate}   
\item 
$\prod _{ \gamma \in \kappa } \mu _ \gamma   $ is not $[ \lambda, \lambda ]$-compact, where each $\mu _ \gamma $ is equivalently endowed either with the
order topology or with the $\iit$ topology. 
\item
$( \lambda , \lambda) {\Rightarrow} (\mu_ \gamma ) _{ \gamma \in \kappa } $
\item  
For every set $I$ and every product $X= \prod _{i \in I} X_i$ 
of topological spaces, if $X$ is 
$ [\lambda , \lambda  ]$-compact, then
for every injective function $g \co  \kappa \to I$ 
there is $\gamma \in \kappa $ such that 
$X _{g( \gamma )} $ is  $[\mu _ \gamma , \mu _ \gamma  ]$-compact.
 \end{enumerate} 
 \end{proposition}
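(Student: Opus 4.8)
The plan is to establish the cycle $(1)\Leftrightarrow(2)$, $(3)\Rightarrow(1)$ and $(2)\Rightarrow(3)$. The equivalence $(1)\Leftrightarrow(2)$ is immediate from Corollary \ref{produfb}: that corollary asserts that $\prod_{\gamma\in\kappa}\mu_\gamma$ is $[\lambda,\lambda]$-compact if and only if $(\lambda,\lambda){\not\Rightarrow}(\mu_\gamma)_{\gamma\in\kappa}$, and negating both sides yields $(1)\Leftrightarrow(2)$, since by Definition \ref{defb} the statement $(\lambda,\lambda){\Rightarrow}(\mu_\gamma)_{\gamma\in\kappa}$ is precisely the negation of $(\lambda,\lambda){\not\Rightarrow}(\mu_\gamma)_{\gamma\in\kappa}$. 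Corollary \ref{produfb} also shows that the choice of the order or the $\iit$ topology on the factors is immaterial, so the formulation of $(1)$ is unambiguous. For $(3)\Rightarrow(1)$, I would instantiate $(3)$ at the product $\prod_{\gamma\in\kappa}\mu_\gamma$, each factor carrying the $\iit$ topology, with $I=\kappa$ and $g$ the identity. Each $\mu_\gamma$ fails to be $[\mu_\gamma,\mu_\gamma]$-compact, because the cover $\{[0,\alpha)\mid\alpha<\mu_\gamma\}$ admits no subcover of size $<\mu_\gamma$, any such subcover being bounded by regularity of $\mu_\gamma$. Hence no $\gamma$ meets the conclusion of $(3)$ for the identity injection, so the hypothesis of $(3)$ must fail; that is, $\prod_{\gamma\in\kappa}\mu_\gamma$ is not $[\lambda,\lambda]$-compact, which is $(1)$.

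The substantive direction is $(2)\Rightarrow(3)$. Assume $(2)$, fix a $[\lambda,\lambda]$-compact product $X=\prod_{i\in I}X_i$ (we may assume every factor nonempty, the empty product being a triviality) together with an injection $g\co\kappa\to I$, and suppose toward a contradiction that every $X_{g(\gamma)}$ fails to be $[\mu_\gamma,\mu_\gamma]$-compact. The first step is to convert this abstract failure into a concrete \emph{escaping} sequence in each factor. For a fixed $\gamma$, non-$[\mu_\gamma,\mu_\gamma]$-compactness supplies an open cover of $X_{g(\gamma)}$ of size exactly $\mu_\gamma$ with no subcover of size $<\mu_\gamma$; forming initial unions and using that $\mu_\gamma$ is a regular limit cardinal, I obtain an increasing open cover $\{V^\gamma_\alpha\mid\alpha<\mu_\gamma\}$ with $\bigcup_{\alpha}V^\gamma_\alpha=X_{g(\gamma)}$ and $V^\gamma_\alpha\neq X_{g(\gamma)}$ for every $\alpha<\mu_\gamma$, and I then pick points $y^\gamma_\alpha\in X_{g(\gamma)}\setminus V^\gamma_\alpha$.

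Now let $(f_\gamma)_{\gamma\in\kappa}$, with $f_\gamma\co[\lambda]^{<\lambda}\to\mu_\gamma$, be a sequence of functions witnessing $(2)$; that is, for every ultrafilter $D$ over $[\lambda]^{<\lambda}$ covering $\lambda$ there is $\gamma\in\kappa$ with $f_\gamma(D)$ uniform over $\mu_\gamma$ (this is exactly the negation of $(*)$ in Definition \ref{defb}). I define a sequence $\langle x_s\mid s\in[\lambda]^{<\lambda}\rangle$ in $X$ by setting $\pi_{g(\gamma)}(x_s)=y^\gamma_{f_\gamma(s)}$ for $\gamma\in\kappa$, and $\pi_i(x_s)$ equal to a fixed point of $X_i$ for indices $i$ not in the range of $g$. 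Since $X$ is $[\lambda,\lambda]$-compact, Theorem \ref{caicb} yields an ultrafilter $D$ over $[\lambda]^{<\lambda}$ covering $\lambda$ and a $D$-limit point $x^*$ of this sequence. Choose, via $(2)$, a $\gamma$ with $f_\gamma(D)$ uniform over $\mu_\gamma$. As a $D$-limit in a product is a $D$-limit coordinatewise (used already in the proof of Corollary \ref{produfb}), $y^*:=\pi_{g(\gamma)}(x^*)$ is a $D$-limit of $\langle y^\gamma_{f_\gamma(s)}\rangle$. Pick $\beta<\mu_\gamma$ with $y^*\in V^\gamma_\beta$; then $\{s\mid y^\gamma_{f_\gamma(s)}\in V^\gamma_\beta\}\in D$. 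Because $y^\gamma_\alpha\notin V^\gamma_\alpha$ and the cover is increasing, $y^\gamma_\alpha\in V^\gamma_\beta$ forces $\alpha<\beta$; hence this $D$-large set is contained in $f_\gamma^{-1}([0,\beta))$, so $[0,\beta)\in f_\gamma(D)$ with $|[0,\beta)|<\mu_\gamma$, contradicting uniformity of $f_\gamma(D)$. This contradiction establishes $(3)$ and closes the cycle.

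I expect the main obstacle to be the very first step of $(2)\Rightarrow(3)$: passing from the purely topological hypothesis that an \emph{arbitrary} factor $X_{g(\gamma)}$ is not $[\mu_\gamma,\mu_\gamma]$-compact to a $\mu_\gamma$-indexed family of escaping points $y^\gamma_\alpha$ whose index can be tracked by the witnessing function $f_\gamma$; this is what lets an ultrafilter-theoretic principle about $[\lambda]^{<\lambda}$ control compactness of general product factors. Regularity of each $\mu_\gamma$ is used twice and essentially, first to produce the proper increasing cover and then to conclude that an initial interval $[0,\beta)$ lying in $f_\gamma(D)$ destroys uniformity. Once the escaping sequence is available and the coordinatewise behaviour of $D$-limits is invoked, the final uniformity contradiction is routine.
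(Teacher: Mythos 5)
Your proof is correct, and its skeleton agrees with the paper's for two of the three implications: (1) $\Leftrightarrow$ (2) via Corollary \ref{produfb} in contrapositive form, and (3) $\Rightarrow$ (1) via the identity injection plus the observation that a regular $\mu_\gamma$ is not $[\mu_\gamma,\mu_\gamma]$-compact. Where you genuinely diverge is the main direction. The paper proves that the failure of (3) implies the failure of (1): it invokes an external lemma (Lemma 4 of \cite{1}) stating that for regular $\mu$ a space is not $[\mu,\mu]$-compact if and only if it admits a continuous surjection onto $\mu$ with the $\iit$ topology, assembles these surjections (using injectivity of $g$ and naturality of products) into a continuous surjection from $X$ onto $\prod_{\gamma\in\kappa}\mu_\gamma$, and concludes by preservation of $[\lambda,\lambda]$-compactness under continuous surjective images, handling the two topologies on the factors via the already-established equivalence with (2). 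You instead prove (2) $\Rightarrow$ (3) directly: your increasing proper open covers $\{V^\gamma_\alpha\}$ and escaping points $y^\gamma_\alpha$ are essentially an inlined proof of that cited lemma (the map sending $x$ to the least $\alpha$ with $x\in V^\gamma_\alpha$ is exactly the continuous surjection onto $\mu_\gamma$ with the $\iit$ topology), and your ultrafilter computation through Theorem \ref{caicb} replaces the image-preservation argument. What your route buys is self-containment---everything is derived from Theorem \ref{caicb} and Definition \ref{defb}, with no appeal to an outside reference; what the paper's route buys is brevity and the isolation of a reusable topological characterization. Two small inaccuracies in your commentary, neither affecting the proof: the phrase ``regular limit cardinal'' is a slip, since a regular $\mu_\gamma$ need not be a limit cardinal, and in fact the step it decorates (properness of the sets $V^\gamma_\alpha$) needs only that $\mu_\gamma$ is an infinite cardinal; likewise your closing claim that regularity is used ``twice and essentially'' in (2) $\Rightarrow$ (3) is misplaced---as you wrote it, that direction never uses regularity (bounded initial segments have small cardinality for any infinite cardinal), whereas regularity is genuinely needed in (3) $\Rightarrow$ (1) and, through Corollary \ref{produfb}, in (1) $\Leftrightarrow$ (2). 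Finally, note that your explicit assumption that all factors are nonempty is also implicit in the paper's proof (surjectivity of the assembled map requires it), so you are not losing any generality the paper claims.
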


\begin{proof}
(1) $\Leftrightarrow $   (2) is  Corollary \ref{produfb} in contrapositive form.
(3) $\Rightarrow $  (1) is
trivial by taking $I= \kappa $ and $g$ to be the identity function
 and observing that $\mu _ \gamma$ is not $[\mu _ \gamma , \mu _ \gamma  ]$-compact (with either topology), since $\mu _ \gamma$ is regular. 
To finish the proof we shall prove that if (3) fails then (1) fails.
So suppose that (3) fails as witnessed by some $ [\lambda , \lambda  ]$-compact $X= \prod _{i \in I} X_i$
and an injective $g \co  \kappa \to I$ such that no $X _{g( \gamma )} $ is  $[\mu _ \gamma , \mu _ \gamma  ]$-compact.
It is easy to see that if $\mu $ is regular then a space
is not $[\mu , \mu ]$-compact if and only if there is a continuous surjective
function $h \co X \to \mu $ with the $\iit$ topology; see, e.~g., Lipparini \cite[Lemma 4]{1}. 
Hence for each $\gamma \in \kappa $  we have 
a continuous surjective
function $h _ \gamma  \co X_ \gamma  \to \mu _ \gamma  $
and, by naturality of products and since $g$ is injective, a continuous surjective
$h   \co X  \to  \prod _{ \gamma \in \kappa } \mu _ \gamma   $.
Thus if $X$ is $ [\lambda , \lambda  ]$-compact then so is
$\prod _{ \gamma \in \kappa } \mu _ \gamma $, since 
$ [\lambda , \lambda  ]$-compactness is preserved 
under surjective continuous images, hence (1) fails in the case 
the $\mu _ \gamma $'s are assigned the $\iit$ topology. This is enough,
since we have already proved that in (1) we can equivalently consider either topology, 
since in each case (1) is equivalent to (2). 
 \end{proof} 

Notice that, in particular, it follows from 
Proposition \ref{tuttxb} that if
 $\mu $ is regular, $(\lambda, \lambda ) {\Rightarrow}\!^ \kappa  \mu$
and some product is $ [\lambda , \lambda  ]$-compact, 
then all but at most $< \kappa$ 
factors are $ [\mu , \mu  ]$-compact.
In this way we obtain alternative proofs--- 
as well as various strengthenings---of many of the results
we have proved  in \cite{tapp}.

\end{document}